\newcommand{\CC}{\mathbb{C}}
\newcommand{\EE}{\mathbb{E}}
\newcommand{\PP}{\mathbb{P}}
\newcommand{\VV}{\mathbb{V}}
\newcommand{\ZZ}{\mathbb{Z}}
\newcommand{\cC}{\mathcal{C}}
\newcommand{\cG}{\mathcal{G}} 
\newcommand{\cL}{\mathcal{L}} 
\newcommand{\cH}{\mathcal{H}}
\newcommand{\La}{\Lambda} 
\newcommand{\la}{\lambda}
\renewcommand{\a}{\alpha}
\newcommand{\D}{\Delta} 
\renewcommand{\d}{\delta}
\newcommand{\g}{\gamma} 
\renewcommand{\b}{\beta}
\newcommand{\om}{\omega} 
\newcommand{\s}{\sigma}
\newcommand{\eps}{\varepsilon}
\newcommand{\el}{\langle} 
\newcommand{\er}{\rangle}
\newcommand{\tr}{\mathrm{tr}}
\newcommand{\lra}{\leftrightarrow}
\renewcommand{\b}{\beta}
\newcommand{\oo}{\infty}
\newcommand{\sm}{\setminus}
\newcommand{\es}{\varnothing}
\newcommand{\se}{\subseteq}
\newcommand{\crit}{\mathrm{c}}
\renewcommand{\r}{\mathrm{r}}
\newcommand{\w}{\mathrm{w}}
\newcommand{\m}{\mathrm{m}}
\newcommand{\id}{\mathrm{id}}
\newcommand{\one}{\hbox{\rm 1\kern-.27em I}}
\newcommand{\be}{\begin{equation}}
\newcommand{\ee}{\end{equation}}
\newtheoremstyle{slthm}
     {}
     {\baselineskip}
     {\slshape}
     {\parindent}
     {\scshape}
     {.}
     { }
     {}
\theoremstyle{slthm}
\newtheorem{definition}{Definition}[section]
\newtheorem{theorem}[definition]{Theorem}
\newtheorem{proposition}[definition]{Proposition}
\newtheorem{lemma}[definition]{Lemma}
\newtheorem{remark}[definition]{Remark}
\title[Large cycles in random permutations]
{Large cycles in random permutations 
related to the Heisenberg model}
\dedicatory{Dedicated to Svante Janson on the occasion of his 60'th birthday}
\author{J. E. Bj\"ornberg}
\date{\today}
\thanks{Research supported by the Knut and Alice
Wallenberg Foundation}
\begin{document}

\begin{abstract}
We study the weighted version of the interchange process
where a permutation receives weight 
$\theta^{\#\mathrm{cycles}}$.  For $\theta=2$ this
is T\'oth's representation of the quantum Heisenberg
ferromagnet on the complete graph.  
We prove, for $\theta>1$, that large cycles appear at
`low temperature'.
\end{abstract}

\maketitle

\section{Introduction}

The interchange process and related models of random
permutations  are interesting
both for their beautiful mathematics,
and  for their  relevance to quantum theoretical
models for magnetization.
The interchange process may be described as follows.  Fix
an integer $n$ and put $n$ labelled balls into
$n$ labelled boxes, ball $i$ in box $i$.  At each
time $t=1,2,\dotsc$, select uniformly 
(independently) a pair of
distinct boxes $i,j$ and transpose the balls inside
them.  At a given time $t$, box $i$ contains
some ball $\pi_t(i)$, where $\pi_t$ is a permutation
of $1,2,\dotsc,n$.
Said otherwise, $\pi_t$ is the composition of $t$
independent, uniformly chosen transpositions.

Being a permuation, $\pi_t$ can be written as a
product of disjoint cycles.   
Schramm showed in~\cite{schramm},
proving a conjecture of Aldous in~\cite{bd},
 that if $t$
is of the form $\lfloor cn\rfloor$ with $c>1/2$,
then with probability approaching 1 as $n\to\oo$,
the largest cycle has size of order $n$
(for $c<1/2$ it is of order $\log n$).
He also described the scaling limit
of the cycles 
in terms of the 
Poisson--Dirichlet distribution. 

In this paper we study random permutations which
are biased towards having many small cycles.
A precise definition  is given in the
next subsection, but roughly speaking we consider the 
weighted version of
the interchange process where each permutation
$\pi$ receives a weight $\theta^{\ell(\pi)}$.
Here $\theta\geq1$ is fixed, and 
 $\ell(\pi)$ is the total number of cycles in $\pi$.
For $\theta=1$ one recovers the interchange process.
Our main result (Theorem~\ref{cycles-thm})
is that large cycles appear for $c>\theta/2$.

The model is motivated by considerations in statistical
physics, where it provides a probabilistic
representation of the (ferromagnetic) quantum
Heisenberg model on the complete graph $K_n$.
It is a notorious open problem to prove that 
the quantum Heisenberg ferromagnet on the 
lattice $\ZZ^d$, $d\geq3$, can exhibit
a nonzero magnetization.

\subsection{Model and main result}
\label{model-sect}

We will in fact work in continuous time, and
with a different time-scaling than 
described above.  
Let $G=K_n=(V,E)$ be the complete graph on the
vertex set $V=\{1,\dotsc,n\}$, with edge set $E=\binom{V}{2}$.
Let $\b>0$ and let $\PP_1(\cdot)$ denote a probability measure
governing a collection $\om=(\om_{xy}:xy\in E)$
of independent rate 1 Poisson processes on $[0,\b]$,
indexed by the edges.  If $\om_{xy}$ has an event at time $t\in[0,\b]$
we  write $(xy,t)\in\om$.  We think of such an event as a
transposition of the vertices $x,y$ at time $t$.  The time-ordered
product of these transpositions gives a permutation $\pi=\pi(\om)$ of
$V$.  More precisely, if we write $(x_iy_i,t_i)$, $i=1,\dotsc,N$,
for the points of $\om$, indexed so that $0<t_1<\dotsc<t_N<\b$,
and write $\tau_i=(x_i,y_i)$ for the transposition of $x_i$ and $y_i$,
then we have that $\pi=\tau_N\dotsb\tau_2\tau_1$.  

Let $\ell=\ell(\om)$ denote the number of cycles in a disjoint-cycle
decomposition of $\pi(\om)$, including singletons.  
Let $\cC_1(\pi),\dotsc,\cC_\ell(\pi)$ denote the cycles ordered by
decreasing size (breaking ties by any rule).
For $\theta\geq1$
we will consider the distribution of $\om$ and $\pi(\om)$ under the
probability measure $\PP_\theta(\cdot)$ given by
\be\label{PP-def}
\frac{d\PP_\theta}{d\PP_1}(\om)=\frac{\theta^{\ell(\om)}}{Z}.
\ee
Here $Z=Z(\theta,\b)$ is the appropriate normalization.
(The same definition makes sense on 
a general finite graph $G$.)

For $\theta=1$ our model is the continuous-time version of the
interchange process,   sped up by a factor $\binom{n}{2}$ compared to
the introduction, viewed at time $\b$.   We take $\b$ of the form
$\b=\la/n$ where $\la>0$ is constant.

\begin{theorem}\label{cycles-thm}
Let $G=K_n$ and $\b=\la/n$ with $\la>\theta>1$.  
For each $\eps>0$ there is
$\d>0$ such that, for $n$ large enough,
\[
\PP_\theta(|\cC_1(\pi)|\geq \d n)\geq 1-\eps.
\]
\end{theorem}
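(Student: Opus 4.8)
The plan is to reduce the statement to a lower bound on a two-point (same-cycle) function, to establish that bound by a mean-field branching comparison, and to promote it to high probability by a second-moment argument; the cycle-counting weight is to be controlled through a random-cluster representation together with positive association.

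\smallskip
\emph{Reduction to the two-point function.} Write $x\lra y$ for the event that $x$ and $y$ lie in the same cycle of $\pi$, and set $\tau_n=\PP_\theta(x\lra y)$, which by the symmetry of $K_n$ does not depend on the pair. Counting ordered pairs (including the diagonal) gives the identity $\sum_{x,y}\mathbf{1}_{\{x\lra y\}}=\sum_i|\cC_i(\pi)|^2\le |\cC_1(\pi)|\cdot n$, and taking $\EE_\theta$ yields $\EE_\theta[|\cC_1(\pi)|]\ge (n-1)\tau_n$. Hence it suffices to prove $\liminf_n\tau_n>0$ when $\la>\theta$, and then to upgrade ``linear largest cycle in expectation'' to a high-probability statement. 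For the latter I would run a second moment on $N_\eta:=\#\{x:|\cC(x)|\ge\eta n\}$: a lower bound $\EE_\theta[N_\eta]\ge cn$ together with $\mathrm{Var}_\theta(N_\eta)=o(n^2)$ gives concentration and hence $\PP_\theta(N_\eta\ge\tfrac{c}{2}n)\to1$, which is the theorem with $\d=\eta$. The variance bound reduces to showing that $\{|\cC(x)|\ge\eta n\}$ and $\{|\cC(y)|\ge\eta n\}$ are asymptotically non-positively correlated, exactly the type of estimate the correlation inequalities below are meant to supply.

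\smallskip
\emph{Where the threshold comes from.} Writing $\pi=\tau_N\dotsb\tau_1$ and letting $\sigma_k=\pm1$ according to whether the $k$-th transposition splits one cycle (its endpoints already share a cycle) or merges two cycles, one has $\ell(\pi)=n+\sum_k\sigma_k$, so that
\[
\theta^{\ell(\pi)}=\theta^{\,n}\prod_{k=1}^{N}\theta^{\sigma_k}.
\]
Thus, relative to the interchange process, $\PP_\theta$ rewards splits by a factor $\theta$ and penalises merges by $\theta^{-1}$. Since it is precisely the merges that enlarge the cycle through a given vertex, the effective rate at which that cycle grows is depressed by $\theta$, which turns Schramm's critical value $c=\tfrac12$ (equivalently $\la=1$ for the link graph $G(n,\la/n)$) into $\la=\theta$. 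I would not make this precise through a direct change of measure, since the induced jump rates do not have constant total escape rate and a correction term survives, but through the random-cluster representation, in which the same $\theta$ enters as the cluster weight $q=\theta$ and the bookkeeping is cleaner.

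\smallskip
\emph{The crux: lower bound on $\tau_n$.} I would first set up the random-cluster representation of the weighted loop/permutation measure and verify the FKG inequality, valid in the regime $\theta\ge1$ of the theorem. Positive association is what allows one to ignore the nonlocal character of the weight $\theta^{\ell}$ and to bound connectivities from below by local, tree-like events. On $K_n$ I would then explore the cluster of $x$ as a branching process: each revealed point acquires a Poisson number of potential neighbours through the rate-$1$ bridges over $[0,\b]$, giving mean offspring $\sim\la$, while the cycle weight tilts the exploration so that the effective mean becomes $\la/\theta$. For $\la>\theta$ this branching process is supercritical, so the exploration survives to linear size with probability bounded away from $0$ uniformly in $n$, and by FKG this survival probability is a lower bound for a local version of $\tau_n$, giving $\liminf_n\tau_n>0$. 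The main obstacle is exactly this step: controlling the exploration under the split/merge reweighting, i.e.\ showing that the accumulated weight along the exploration does not destroy the branching picture and that the correction term noted above is asymptotically negligible in the pre-giant regime, where all clusters are $o(n)$ and the accumulated weight should stay tight. Establishing this tightness, and the matching upper bound required for the second moment, is where the genuine work lies; the reduction of the first paragraph and the heuristic of the second are comparatively routine.
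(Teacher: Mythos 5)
Your proposal is a plan rather than a proof, and the two steps you yourself flag as ``where the genuine work lies'' are precisely the ones that fail or are not known to be available. The most serious gap is the appeal to a random-cluster representation with FKG for the measure $\PP_\theta$. The standard proof of positive association for the random-cluster model with $q\geq 1$ rests on the fact that adding an edge changes the number of clusters by $0$ or $-1$, monotonically, which yields the lattice (FKG) condition for $q^{k(\omega)}$. For the transposition process the analogous statement is false: adding a cross $(xy,t)$ can either merge two cycles or split one, depending on the rest of the configuration, so $\ell(\omega)$ does not satisfy the required supermodularity, and positive association for cycle-weighted interchange/loop measures is not a known tool (it is, in this generality, open). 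Without it, both your lower bound on $\tau_n$ by comparison with local tree-like events and the asymptotic non-positive correlation needed for the variance bound on $N_\eta$ have no support. The second gap is the branching comparison itself: the claim that the split/merge reweighting depresses the effective offspring mean from $\lambda$ to exactly $\lambda/\theta$ is the right heuristic, but you give no mechanism for controlling the accumulated weight $\prod_k\theta^{\sigma_k}$ along an exploration --- the tilt is nonlocal (whether a cross splits or merges depends on the global cycle structure at that time), and ``the accumulated weight should stay tight'' is an assertion, not an argument. Your first-paragraph reduction (two-point function, second moment on $N_\eta$) is fine as bookkeeping, but it only relocates the difficulty.

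The paper avoids all correlation inequalities by an exact distributional identity. One colours each loop red independently with probability $1/\theta$; the colouring lemma (Lemma~\ref{red-lem}) shows that, conditionally on the union $R$ of the red loops, the red crosses are an \emph{unweighted} Poisson process, because summing $\theta^{\ell(\omega)}$ over colourings produces an exact cancellation of the cycle weight on the red part. Lemma~\ref{twist-lem} then identifies the red cycles with an interchange process on the $\approx n/\theta$ red vertices (composed with a deterministic twist), run at effective density $\lambda/\theta>1$, so Schramm's theorem --- in the mildly generalized form of Proposition~\ref{intch-prop} --- applies directly. This is where the threshold $\lambda=\theta$ genuinely comes from: not from a tilted exploration, but from the thinning of the vertex set by the factor $1/\theta$. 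If you want to salvage your route, you would need to either prove FKG for this measure (hard, likely false in the needed generality) or find a different way to make the tilted exploration rigorous; the colouring identity is the device that makes the problem tractable.
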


Our proof of Theorem~\ref{cycles-thm}
relies on a colouring-lemma inspired by the
approach of Bollob\'as, Grimmett and 
Janson~\cite{bgj} to the random-cluster model.
Roughly speaking, if we
sample  $\om$ and then 
colour each cycle red or white independently, 
with probability $\sfrac1\theta$
for red, then the conditional distribution
of the red cycles is determined by an interchange 
process.   See Lemma~\ref{twist-lem} for a 
precise statement.
When $\la>\theta$, we can use the results of
Schramm~\cite{schramm} 
to show that there are  red cycles of order $n$.

With high probability 
there are no cycles having size of order $n$ when $\la$
is small enough, e.g.\ when $\la<e^{-1}$.
This follows 
from~\cite[Theorem~6.1]{guw}.
One would expect that there is a critical value
$\la_\crit(\theta)$ such that there are  cycles
of order $n$ for $\la>\la_\crit$ but not for
$\la<\la_\crit$. Schramm's result shows that $\la_\crit(1)=1$.
It follows from the work of
T\'oth~\cite{toth-bec} that  $\la_\crit(2)=2$.
For other values of $\theta$ the existence of $\la_\crit$ is not
known, and Theorem~\ref{cycles-thm} is the first result on the 
occurrence of large cycles in this generality.

Regarding other choices for $G$,
the interchange process ($\theta=1$) has 
been investigated on general graphs by 
Alon and Kozma~\cite{alon-kozma}, and on infinite trees by
Angel~\cite{angel}
and by Hammond~\cite{hammond-infinite,hammond-sharp}.
In ongoing work, Koteck\'y,
Mi{\l}o\'s and Ueltschi
are investigating cases with $\theta\neq1$
on  the hypercube.

\subsection{Relation to the Heisenberg model}
\label{heis-sect}

For $\theta=2$ the cycles 
in our model represent correlations
in the (ferromagnetic, quantum) Heisenberg model,
as shown by T\'oth~\cite{toth}.  
Here is a brief account, see 
the review~\cite{guw} for more details.

The Heisenberg model
on $G$ is given by the Hamiltonian
\[
H=-2\sum_{xy\in E} 
(\s_x^{(1)}\s_y^{(1)}+\s_x^{(2)}\s_y^{(2)}+\s_x^{(3)}\s_y^{(3)}).
\]
Here
\[
\s^{(1)}=\tfrac12\begin{pmatrix} 0& 1\\ 1& 0 \end{pmatrix},\quad
\s^{(2)}=\tfrac12\begin{pmatrix} 0 & -i\\ i& 0 \end{pmatrix},\quad
\s^{(3)}=\tfrac12\begin{pmatrix} 1& 0\\ 0& -1 \end{pmatrix}
\]
are spin-$\tfrac12$ Pauli matrices,
and $\s_x^{(j)}$ acts on the Hilbert space
$\cH_V=\bigotimes_{x\in V}\CC^2$ as 
$\s^{(j)}\otimes\mathrm{Id}_{V\sm\{x\}}$.
Magnetic correlations between vertices $x,y\in G$
are given by
the \emph{correlation functions}
\be\label{corr}
\el \s_x^{(3)}\s_y^{(3)} \er:=
\frac{\tr\big( \s_x^{(3)}\s_y^{(3)} e^{-\b H}\big)}
{\tr \big(e^{-\b H}\big)},
\ee
where $\tr(\cdot)$ denotes the trace of a matrix.
In this formulation the parameter $\b>0$ is usually
called the \emph{inverse temperature}.
(It is the same $\b$ as in Section~\ref{model-sect}.)

T\'oth's representation expresses the 
correlations~\eqref{corr}
probabilistically.  Write $\{x\lra y\}$
for the event that 
$x$ and $y$ belong to the same 
cycle in the permutation $\pi(\om)$.  
Then we have, with $\theta=2$:
\[
\el \s_x^{(3)}\s_y^{(3)} \er=
\tfrac14\PP_2(x \lra y).
\]
Thus the occurrence of large cycles in $\pi(\om)$
corresponds, in physical terms, to magnetic ordering.

The quantum model also possesses 
other probabilistic representations.
In the paper~\cite{toth-bec} T\'oth studied a \emph{lattice gas}
representation and explicitly computed the free energy.
(The same result was independently obtained by 
Penrose~\cite{penrose}.)
By standard arguments one may deduce from these results 
that the quantum Heisenberg model on $G=K_n$
undergoes a phase transition at $\b=2/n$, 
as mentioned above.

\subsection*{Outline and notation}

We describe a graphical representation, 
and present the key colouring-lemma, in
Section~\ref{red-sect}, followed by the proof of 
Theorem~\ref{cycles-thm} in Section~\ref{macro-sect}.

The abbreviation i.i.d.\ means 
independent and identically distributed.
The identity permutation is denoted $\id$.
Unspecified limits are for $n\to\oo$.
If $a_n/b_n\to0$ then we write 
$a_n=o(b_n)$, if $a_n/b_n$ is bounded above 
then we  write $a_n=O(b_n)$.
The indicator of an event $A$ is written
$\one_A$, and takes the value 1 if $A$ happens,
otherwise 0.  Expectation with respect to $\PP_\theta$ will be
written $\EE_\theta$.

\section{Colouring-lemma}
\label{red-sect}

The following graphical representation of a sample $\om$ will be
useful.  
We picture $\om$ in $G\times[0,\b]$, representing a point
$(xy,t)\in\om$ by a `cross' as in Figure~\ref{loops-fig}.
The crosses decompose $V\times[0,\b]$ into a collection $\cL(\om)$ of
disjoint \emph{loops}, obtained as follows.  Starting at some point
$(x,0)\in V\times[0,\b]$, 
we follow the interval $\{x\}\times[0,\b]$ until we reach the
first cross (if any), i.e.\ the first point $(xy,t)\in\om$ for some
$y$.  At this point we jump to $(y,t)$ and
continue as before on the interval $\{y\}\times[t,\b]$ until the next
cross (if any).  Eventually we reach a point $(z,\b)$.  We then
continue the loop at $(z,0)$, i.e.\ we apply periodic boundary
conditions `vertically'.  The loop is completed when we return to the
starting point $(x,0)$.
Denote the loop by $\g$ and identify $\g$ with
the union of all intervals of the form $\{y\}\times[s,t)$
that it traverses.

Recall the permutation $\pi=\pi(\om)$ defined
in Section~\ref{model-sect}.
The points $z$ such that $\g$ visits
$(z,0)$ are precisely $x,\pi(x),\pi^2(x),\dotsc$ 
(in the same order).  Thus the loops $\g$ are in one-to-one
correspondence with the cycles $\cC$ of $\pi$, and the total vertical
length of a loop equals $\b$ times the size of the corresponding cycle.

\begin{figure}
\centering
\includegraphics[scale=.8]{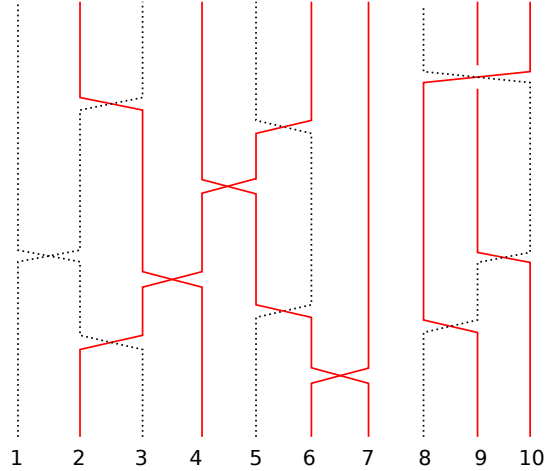}
\caption{
Loop-representation of a sample $\om$.
The vertex set $V=\{1,\dotsc,10\}$ is drawn on the 
horizontal axis, and `time' is on the vertical axis.
Transpositions $(xy,t)\in\om$ are represented 
as crosses.  The resulting loops are
coloured red (solid red lines) or white (dotted lines).
Here $\pi(\om)=(1,3)(2,6,7,4)(9,10)$, and there are
two red loops corresponding to the cycles 
$(2,6,7,4)$ and $(9,10)$.
}
\label{loops-fig}
\end{figure}

Let $\theta>1$ and $r=\sfrac1\theta$.
 Given $\om$, colour each loop $\g\in\cL(\om)$
red or white, independently of each other, with probability
$r$ for red.  Write $R$ and $W$ for the unions of the
red and white loops, respectively;  they are
subsets of $V\times[0,\b]$.  
See Figure~\ref{loops-fig}.
The points of $\om$ (i.e.\ the crosses) now fall into
three categories:  \emph{red}, \emph{white} and \emph{mixed}.
Write $\om_\r$, $\om_\w$ and $\om_\m$ for the red, white and mixed 
crosses, respectively.  Thus $\om=\om_\r\cup\om_\w\cup\om_\m$.

For $H\se E\times[0,\b]$ measurable, let 
$\PP_1^H(\cdot)$ denote the
law of the restriction $\om\cap H$ of $\om$ to $H$.  
Similarly,  for measurable $S\se V\times[0,\b]$, let
$\PP_1^S(\cdot)$ be the restriction of $\PP_1(\cdot)$ to the set
\[
\{(xy,t)\in E\times[0,\b]: (x,t)\in S \mbox{ and } (y,t)\in S \};
\]
that is, the set of points in $E\times[0,\b]$
`between' points of $S$.
The following is the key colouring-lemma.

\begin{lemma}\label{red-lem}
Given $R$, the distribution of $\om_\r$ is $\PP_1^R(\cdot)$,
and $\om_\r$ is conditionally independent of $\om_\w$.
\end{lemma}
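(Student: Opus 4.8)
The plan is to condition on the uncoloured process and to exploit the special choice $r=\sfrac1\theta$, which makes the weight attached to red loops cancel exactly. We work under the weighted measure $\PP_\theta$ together with the independent colouring, and we write $\ell_\r(\om)$ and $\ell_\w(\om)$ for the numbers of red and white loops, so that $\ell(\om)=\ell_\r(\om)+\ell_\w(\om)$.

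First I would record what the set $R$ does and does not determine. Since every point of $V\times[0,\b]$ lies on exactly one loop, the white region is simply $W=(V\times[0,\b])\sm R$. A red cross has both of its strands red, so near such a cross $R$ is continuous in time on both vertices involved; thus red crosses leave no trace in $R$. By contrast a mixed cross is exactly a point where $R$ switches to $W$ on one vertex and $W$ switches to $R$ on the other, so the endpoints of the maximal red intervals recover $\om_\m$, the pairing being unambiguous because almost surely no two crosses share a time. Hence $R$ determines $W$ and $\om_\m$, while $\om_\r$ and $\om_\w$ remain free: by construction $\om_\r$ lies in the region between points of $R$ on which $\PP_1^R$ is supported, $\om_\w$ lies in the analogous region for $W$, and these two regions are disjoint because $R\cap W=\es$.

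Next I would compute the conditional law. Fixing $\om$, the colouring consistent with a given $R$ is forced (a loop is red iff it is contained in $R$) and carries probability $r^{\ell_\r}(1-r)^{\ell_\w}$. Multiplying by the density $\theta^{\ell}=\theta^{\ell_\r}\theta^{\ell_\w}$ of $\PP_\theta$ (up to the constant $Z$) gives the weight
\[
\theta^{\ell_\r}\theta^{\ell_\w}\,r^{\ell_\r}(1-r)^{\ell_\w}=(\theta r)^{\ell_\r}\big(\theta(1-r)\big)^{\ell_\w}=(\theta-1)^{\ell_\w},
\]
using $\theta r=1$; the point is that the red loops now contribute the trivial factor $1^{\ell_\r}$. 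Since red loops are built from red legs joined along $\om_\r$ and along the red strands of the fixed crosses $\om_\m$, the count $\ell_\r$ is a function of $\om_\r$ alone once $R$ is fixed, and likewise $\ell_\w$ depends only on $\om_\w$. As $\PP_1$ is Poisson and the regions carrying $\om_\r$ and $\om_\w$ are disjoint, the conditional law of $(\om_\r,\om_\w)$ given $R$ is proportional to $\PP_1^R(d\om_\r)\,(\theta-1)^{\ell_\w(\om_\w)}\PP_1^W(d\om_\w)$. This product form yields both conclusions at once: $\om_\r$ is conditionally independent of $\om_\w$, and its marginal is $\PP_1^R$ with no surviving weight.

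The step I expect to require the most care is the measure-theoretic passage to the conditional law given $R$, since the boundary data $\om_\m$ is a finite point set and hence a $\PP_1$-null configuration. I would make this rigorous by partitioning $E\times[0,\b]$ into the three regions (between points of $R$, between points of $W$, and the mixed region) and using that the restrictions of a Poisson process to disjoint regions are independent: conditioning on the mixed part being exactly $\om_\m(R)$ leaves the red and white parts as independent rate-$1$ Poisson processes, reweighted only through $\ell_\w$. The one geometric fact that must be checked en route is the decoupling $\ell=\ell_\r(\om_\r)+\ell_\w(\om_\w)$, i.e.\ that inserting or deleting a red cross changes $\ell_\r$ by $\pm1$ without touching the white loops; this is also what makes the cancellation transparent, since such an insertion multiplies the $\PP_\theta$-weight by $\theta^{\pm1}$ and the colouring weight by $r^{\pm1}=\theta^{\mp1}$.
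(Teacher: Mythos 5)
Your argument is correct and follows essentially the same route as the paper: the cancellation $\theta^{\ell}r^{\ell_\r}(1-r)^{\ell_\w}=(\theta-1)^{\ell_\w}$, the decoupling $\ell=\ell_\r(\om_\r)+\ell_\w(\om_\w)$, and the observation that $R$ fixes $\om_\m$ while leaving $\om_\r,\om_\w$ free are exactly the mechanism the paper uses. The only difference is presentational: the paper makes the disintegration rigorous by summing over vertex colourings $q$ and invoking the Poisson identity $\int d\PP_1(\om)\sum_{\xi\se\om}\a(\xi;\om\sm\xi)=e^{\b|E|}\int d\PP_1(\xi)\int d\PP_1(\zeta)\,\a(\xi;\zeta)$, which is the precise version of the ``independent restrictions to disjoint regions'' step you defer at the end.
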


In words, conditional on the red set, the red crosses simply
form a Poisson process.
This means that (given $R$) the red cycles are obtained
from a sample of the interchange process, in a way which
will be made precise in Lemma~\ref{twist-lem}.
Lemma~\ref{red-lem} holds for general graphs $G$, with the same
proof. 

One way to check 
Lemma~\ref{red-lem} is to finely discretize the 
Poisson processes. 
We present instead a `clean' proof, and
the basic approach is as follows.  
We write a coloured loop-configuration 
as a pair $(q,\om)$, where $q\in\{\r,\w\}^V$ is a colouring of the
vertices.  We interpret $q_x$ as the colour of the loop containing
$(x,0)$, 
thus we require the pair $(q,\om)$ to be \emph{consistent} in 
that $q_x=q_y$ whenever $x,y$ belong to the same cycle.  
Write $\cC(q)$ for the set of $\om$ that are
consistent with $q$.  Note that, for $\om\in\cC(q)$, the red
and white sets $R$, $W$ are determined by the pair $(q,\om_\m)$ where
$\om_\m$ are the mixed crosses as before.  Indeed, deleting red or
white crosses does not change $R$ or $W$:
compare Figure~\ref{loops-fig-2} with Figure~\ref{loops-fig}.
Thus we may write
$R=R(q,\om_\m)$ and $W=W(q,\om_\m)$, and moreover there is some
freedom in the choice of $\om_\r\cup\om_\w$.  The only restriction on
$\om_\r\cup\om_\w$ is that it is a subset of
\[\begin{split}
H=H(q,\om_\m)=&\{(xy,t): (x,t)\in R\mbox{ and } (y,t)\in R\}\\
&\cup \{(xy,t): (x,t)\in W\mbox{ and } (y,t)\in W\}.
\end{split}\]
We will proceed by conditioning on the mixed crosses $\om_\m$.  
When integrating over the allowed choices for $\om_\r$, a cancellation
occurs in the factor $\theta^{\ell(\om)}$ which removes the
dependencies in $R$.

\begin{proof}[Proof of Lemma~\ref{red-lem}]
Let $X=X(R)$ be a bounded $R$-measurable random variable, 
and consider events
$A=A(\om_\r)$ and 
$B=B(\om_\w)$ depending on $\om_\r$ and $\om_\w$,
respectively.  We will give 
an expression for  $\EE_\theta[X\one_A\one_B]$ which will allow us to
deduce the result.

Write $\ell_R(\om_\r)$ and $\ell_W(\om_\w)$ for the number of red and
white loops, respectively, so that 
$\ell(\om)=\ell_R(\om_\r)+\ell_W(\om_\w)$.
For each colouring $q$, let $\Xi(q)$ be the set of possible $\om_\m$ 
for $\om\in\cC(q)$.  
Then $\om$ is consistent with $q$ if and only if it can
be decomposed as a disjoint union $\om=\xi\cup\zeta$ with
$\xi\in\Xi(q)$ and $\zeta\se H(q,\xi)$.
We thus have that
\be\label{cond-exp}\begin{split}
\EE_\theta[X\one_A\one_B]
&=\frac{1}{Z}\sum_{q\in\{\r,\w\}^V}
\EE_1 \big[\one\{\om\in\cC(q)\}
\theta^{\ell(\om)}  
r^{\ell_R(\om_\r)}(1-r)^{\ell_W(\om_\w)} 
X \one_{A}\one_B\big]\\
&=\frac{1}{Z} \sum_{q\in\{\r,\w\}^V} \EE_1\Big[
\sum_{\xi\se\om} \one\{\xi\in\Xi(q); \om\sm\xi\se H\}   
 (\theta-1)^{\ell_W(\om_\w)} 
X \one_{A}\one_B\Big].
\end{split}\ee
The expectation on the
right-hand-side of~\eqref{cond-exp} can be written in the form 
\be\label{pp-id}
\int d\PP_1(\om)\sum_{\xi\se\om} \a(\xi;\om\sm\xi)
=e^{\b |E|}\int d\PP_1(\xi)\int d\PP_1(\zeta) \a(\xi;\zeta)
\ee
with
\[
\a(\xi;\zeta)=
\one\{\xi\in\Xi(q); \zeta\se H(q,\xi)\}   
 (\theta-1)^{\ell_W(\zeta_\w)} 
X(R(q,\xi)) \one_{A}(\zeta_\r)\one_B(\zeta_\w).
\]
We give a proof of
the identity~\eqref{pp-id} below.
From~\eqref{pp-id} we see that
\[
\EE_\theta[X\one_A\one_B]=\frac{e^{\b |E|}}{Z}
\sum_{q\in\{\r,\w\}^V}
\int_{\Xi(q)} d\PP_1(\xi)  
\int d\PP_1(\zeta)
\one\{\zeta\se H\}
(\theta-1)^{\ell_W(\zeta_\w)} 
X \one_A\one_B.
\]
Here, $\xi$ plays the role of $\om_\m$ and 
$\zeta=\zeta_\r\cup\zeta_\w$ that of $\om\sm\om_\m$.
Thus $X$ is a function of $R=R(q,\xi)$, i.e.\ it is $\xi$-measurable,
and $A$, $B$ depend on $\zeta_\r$ and $\zeta_\w$, respectively.
It follows that
\[
\begin{split}
&\int_{\Xi(q)} d\PP_1(\xi)  
\int d\PP_1(\zeta)
\one\{\zeta\se H\}
(\theta-1)^{\ell_W(\zeta_\w)} 
X \one_A\one_B\\
&=\int_{\Xi(q)} d\PP_1(\xi)  X(R(q,\xi))
e^{-|H^\mathrm{c}|}
\PP_1^R(A)
\EE_1^W[(\theta-1)^{\ell_W(\zeta_\w)}  \one_B(\zeta_\w)],
\end{split}\]
where $H^\mathrm{c}=(E\times[0,\b])\sm H$.
It follows that $\PP_\theta(A\mid R)=\PP_1^R(A)$, that 
$\PP_\theta(B\mid R)=\PP_{(\theta-1)}^W(B)$, and that $A$ and $B$ are
conditionally independent, as claimed.

We now verify~\eqref{pp-id}.
Let $U_1,U_2,\dotsc$ and $U_1',U_2',\dotsc$ denote independent
collections of i.i.d.\ uniform random variables on $E\times[0,\b]$.  
The conditional distribution $\PP_1(\cdot\mid |\om|=m)$ coincides with
the law of $\{U_1,\dotsc,U_m\}$.  Thus
\begin{align*}
&\int d\PP_1(\om)\sum_{\xi\se\om} \a(\xi;\om\sm\xi)
=\sum_{m\geq 0} \PP_1(|\om|=m)
\EE\Big[\sum_{\xi\se\{U_1,\dotsc,U_m\}}
\a(\xi;\{U_1,\dotsc,U_m\}\sm\xi)\Big]\\
&=\sum_{m\geq 0} \frac{e^{-\b|E|}(\b|E|)^m}{m!}
\sum_{k=0}^m \binom{m}{k} 
\EE\big[\a(\{U_1,\dotsc,U_k\};\{U_{k+1},\dotsc,U_m\})\big]\\
&=e^{-\b|E|}\sum_{k\geq 0}\sum_{\ell\geq0}
\frac{(\b|E|)^k}{k!}\frac{(\b|E|)^{\ell}}{\ell!}
\EE\big[\a(\{U_1,\dotsc,U_k\};\{U'_{1},\dotsc,U'_{\ell}\})\big]\\
&=e^{\b|E|}\int d\PP_1(\xi)\int d\PP_1(\zeta)
\a(\xi;\zeta),\quad
\mbox{ as claimed.} \qedhere
\end{align*}
\end{proof}

\begin{figure}
\centering
\includegraphics[scale=.8]{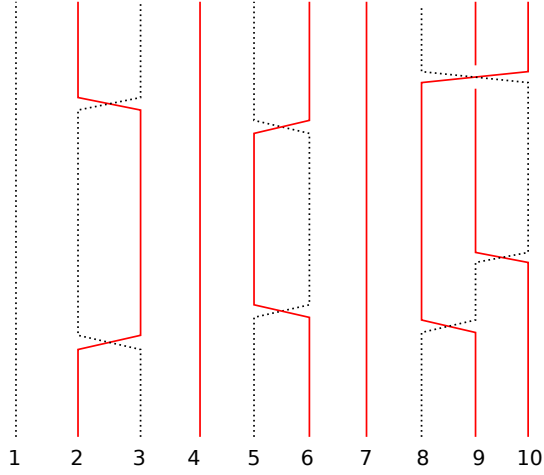}
\caption{
The  configuration $\tilde\om=\om_\m$ 
corresponding to $\om$ of Figure~\ref{loops-fig}.
Here $\tilde\varphi$ is the permutation $(9,10)$
of $R_0=\{2,4,6,7,9,10\}$.
}
\label{loops-fig-2}
\end{figure}
As noted above, $R$ is the same
if we remove all red and all white crosses, that is if
we replace $\om=\om_\r\cup\om_\w\cup\om_\m$
with $\tilde\om=\om_\m$.
We now introduce some notation, see Figure~\ref{loops-fig-2} 
again.  Let 
\[
R_0=\{x\in V: (x,0)\in R\}\se V
\]
be the set of red vertices at time $t=0$.
These are the elements of the red cycles of 
$\pi(\om)$.  For each $x\in R_0$, the trajectory of
$x$ formed by following the vertical lines and crosses
in $\tilde\om$ in the time interval $[0,\b]$
resembles a `crooked line'.  We write 
$\tilde h_t(x)\in V$ for the location of this line
at time $t$.  We write $h_t(x)$ for the corresponding
location obtained using $\om$.  
We take both these functions to be right-continuous in $t$.

The functions 
$h_t(\cdot)$ and $\tilde h_t(\cdot)$ will differ in general,
due to the red crosses.  Our present goal is to describe
their relationship precisely.  Of particular importance
are the functions
\be
\varphi(\cdot):= h_\b(\cdot)\quad\mbox{ and }\quad
\tilde\varphi(\cdot):= \tilde h_\b(\cdot).
\ee
These are  permutations of $R_0$,
and $\varphi$ is precisely the restriction of
$\pi(\om)$ to $R_0$. Hence the cycles of
$\varphi$ are the red cycles of $\pi$.
Clearly $\tilde\varphi$ is  $R$-measurable.

Let $\xi=(\xi_{xy}:x,y\in R_0,x\neq y)$
be a collection of independent rate 1 Poisson processes
on $[0,\b]$, independent of everything else.  
We interpret the points $(xy,t)\in\xi$
as transpositions of the vertices $x,y$ as before, and let
$\s_t$ be the time-ordered product of these transpositions
up to time $t$.  Thus $\s_t$ is a sample of the interchange
process in the set $R_0$.
We will use Lemma~\ref{red-lem} to prove the following.

\begin{lemma}\label{twist-lem}
Given $R$, the conditional distribution of 
$(h_t(\cdot):t\in[0,\b])$ coincides with that of
$(\tilde h_t\circ \s_t:t\in[0,\b])$.
In particular, 
\[
\varphi\overset{(\mathrm{d})}{=} \tilde\varphi\circ\s_\b.
\]
\end{lemma}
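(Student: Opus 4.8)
The plan is to upgrade the statement to an \emph{exact} pathwise identity $h_t=\tilde h_t\circ\s_t$, read the permutation $\s_t$ of $R_0$ off from the red crosses, and then identify the conditional law of $(\s_t)$ given $R$ using Lemma~\ref{red-lem}. First I would record two structural facts. For each $t$, both $h_t$ and $\tilde h_t$ restrict to bijections of $R_0$ onto $R_t:=\{y:(y,t)\in R\}$: each is injective because distinct trajectories never collide, each maps $R_0$ into the red region, and $|R_t|=|R_0|$ since every red loop meets the time-$0$ slice. Hence $\s_t:=\tilde h_t^{-1}\circ h_t$ is a well-defined permutation of $R_0$ with $h_t=\tilde h_t\circ\s_t$ by construction and $\s_0=\id$. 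I would also use that $\tilde h$ depends only on the mixed crosses $\om_\m$, which are recoverable from $R$ (they are precisely the transitions of the red region), so $\tilde h$ is $R$-measurable.

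Next I would track how $\s_t$ evolves in $t$. Away from crosses nothing changes. At a mixed cross $(ab,t)$ exactly one of $a,b$ is red just below $t$, and both $h$ and $\tilde h$ update by post-composition with the transposition $\tau_{ab}$ of $V$ swapping $a$ and $b$; these cancel in $\tilde h_t^{-1}h_t$, so $\s$ is unchanged. At a red cross $(ab,t)$, however, $h_t=\tau_{ab}\circ h_{t-}$ while $\tilde h_t=\tilde h_{t-}$, and therefore
\[
\s_t=\tilde h_{t-}^{-1}\tau_{ab}\,h_{t-}
=\big(\tilde h_{t-}^{-1}\tau_{ab}\tilde h_{t-}\big)\,\s_{t-}
=\tau_{\tilde h_{t-}^{-1}(a)\,\tilde h_{t-}^{-1}(b)}\,\s_{t-},
\]
using that a conjugate of a transposition is again a transposition. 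Thus $(\s_t)$ is exactly the interchange process on $R_0$ driven by the point process $\Psi(\om_\r)$, where $\Psi$ relabels each red cross $(ab,t)$ as the transposition $(\tilde h_{t-}^{-1}(a)\,\tilde h_{t-}^{-1}(b))$ of $R_0$.

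It then remains to identify the conditional law of $\Psi(\om_\r)$ given $R$. By Lemma~\ref{red-lem}, conditionally on $R$ the red crosses $\om_\r$ form a rate-$1$ Poisson process $\PP_1^R$ on $\{(ab,t):a,b\in R_t\}$. Since $\tilde h$ is $R$-measurable, $\Psi$ is a deterministic map once $R$ is fixed; it preserves the time coordinate and, for a.e.\ $t$, carries the edge set $\binom{R_t}{2}$ bijectively onto $\binom{R_0}{2}$ via $\tilde h_{t-}^{-1}$. The Poisson mapping theorem then shows that, given $R$, $\Psi(\om_\r)$ is a rate-$1$ Poisson process on the edges of $R_0$ over $[0,\b]$, i.e.\ it has the law of $\xi$. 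As the interchange process is a deterministic functional of its driving transpositions, this yields $(\s_t)\overset{(\mathrm d)}{=}(\s_t)$ of the statement conditionally on $R$; combined with the $R$-measurability of $\tilde h$ and the identity $h_t=\tilde h_t\circ\s_t$ this gives the first claim, and specialising to $t=\b$ gives $\varphi\overset{(\mathrm d)}{=}\tilde\varphi\circ\s_\b$.

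I expect the main obstacle to be this last conditional-law identification: one must combine Lemma~\ref{red-lem}, which supplies the Poissonian structure of $\om_\r$ given $R$, with the observation that $\Psi$ is frozen once $R$ is fixed, and then check that the time-dependent bijections $\tilde h_{t-}^{-1}$ transport the intensity of $\PP_1^R$ onto that of $\xi$. The pathwise bookkeeping distinguishing red from mixed crosses is routine, but it must be carried out carefully so as to produce the conjugation $\tilde h_{t-}^{-1}\tau_{ab}\tilde h_{t-}$ and hence the correct driving process.
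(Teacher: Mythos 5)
Your proof is correct and follows essentially the same route as the paper's: the paper couples $\om_\r$ with an independent Poisson process $\xi$ on $R_0$ by pushing $\xi$ forward through $\tilde h_t$ (using Lemma~\ref{red-lem} and the symmetry of $K_n$) and then verifies $h_t=\tilde h_t\circ\s_t$ pathwise by induction over the cross times, while you define $\s_t=\tilde h_t^{-1}\circ h_t$ and pull the red crosses back through $\tilde h_{t-}^{-1}$, identifying the resulting driving process as Poisson via Lemma~\ref{red-lem} and the mapping theorem --- the same case analysis of mixed versus red crosses, run in the opposite direction. The only (trivial) omission is the case of white crosses, which affect neither $h_t|_{R_0}$ nor $\tilde h_t$ since no trajectory started in $R_0$ passes through them.
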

In words, 
the conditional
distribution of the red cycles of $\pi(\om)$,
given the union $R$ of the red \emph{loops}, is given by the
interchange process $\s_\b$ on $R_0$, composed with
a `twist' $\tilde\varphi$ (which is a function of $R$).

\begin{proof}[Proof of Lemma~\ref{twist-lem}]
The key observation
is that, thanks to Lemma~\ref{red-lem}
and the symmetry of the complete graph, 
$\om_\r$ has the  same (conditional)
distribution as the collection of points of the 
form
\be\label{red-xi}
(\tilde h_t(x)\tilde h_t(y),t)\mbox{ for }
(xy,t)\in \xi.
\ee
For simpler notation we identify $\om_\r$
with the collection of points in~\eqref{red-xi}.
With this identification, we can
prove the statement of the lemma with equality
(not just in distribution).

Some further notation is required.  
Let $R_t=h_t(R_0)=\tilde h_t(R_0)$ be the set of red vertices
at time $t$.  Let $t_1<t_2<\dotsb$ denote the sequence of 
times at which there  are mixed crosses (elements of 
$\om_\m=\tilde\om$).  Also set $t_0=0$.  Then $R_t$
is constant for $t_{k-1}\leq t<t_k$, for $k\geq 1$. 
Moreover, there is a unique $a_k\in R_{t_{k-1}}$ and a
unique $b_k\not\in R_{t_{k-1}}$ such that 
$R_{t_k}=\psi_k(R_{t_{k-1}})$, where
\[
\psi_k(x)=\left\{\begin{array}{ll}
x, & \mbox{if } x\neq a_k,\\
b_k, &  \mbox{if } x= a_k.\end{array}\right.
\]
Then for $t_k\leq t<t_{k+1}$ we have that
\[
\tilde h_t=\psi_k\circ \psi_{k-1}\circ\dotsb\circ\psi_1.
\]
Now, for all $k\geq 0$ let 
\[
t_k<s^{(k)}_1<s^{(k)}_2<\dotsb<s^{(k)}_{m_k}<t_{k+1}
\]
denote the times of events (transpositions) in $\xi$,
and for $1\leq q\leq m_k$ let 
$\tau^{(k)}_q=(x^{(k)}_q,y^{(k)}_q)$ denote the
corresponding transposition.  Here 
$x^{(k)}_q,y^{(k)}_q\in R_{t_k}$.

With this notation in hand, we can turn to the proof.
For $t=0$ we have $\tilde h_0=\id$, $\s_0=\id$ and
$h_0=\id$, so clearly the claim holds then.  
In fact, for $t<t_1$ we have
that $\tilde h_t=\id$ and  that $h_t=\s_t$, so the claim
holds for such $t$ (by Lemma~\ref{red-lem}).  
Since the functions involved only change at times $t_k$
and $s^{(k)}_q$, we can proceed by induction.

Assume that the claim holds for some $t>0$, i.e.\ assume that
\be\label{time-t}
h_t=\tilde h_t\circ\s_t.
\ee
Let $t'$ be the time
of the next `event'.  That is, either $t'=t_k$ for some $k$, or
$t'=s^{(k)}_q$ for some $k,q$.   It suffices to show that
$h_{t'}=\tilde h_{t'}\circ\s_{t'}$ holds in both cases.

\emph{First case:} $t'=t_k$.  Then $h_{t_k}$ is  obtained by
applying $\psi_k$, thus
\[
h_{t_k}=\psi_k\circ h_t=(\psi_k\circ\tilde h_t)\circ\s_t
=\tilde h_{t_k}\circ\s_t =\tilde h_{t_k}\circ\s_{t_k}.
\]
Here we used~\eqref{time-t} and the fact that $\s_{t_k}=\s_t$.  
Thus $h_{t'}=\tilde
h_{t'}\circ\s_{t'}$ holds in this case.

\emph{Second case:} $t'=s^{(k)}_q$.
Now we have that $\tilde h_{t'}=\tilde h_t$, and that 
\[
\s_{t'}=\tau^{(k)}_q\circ\s_t=
(x^{(k)}_q ,y^{(k)}_q)\circ\s_t.
\]
By the identification of $\om_\r$ with~\eqref{red-xi}, 
we obtain $h_{t'}$ by transposing
$\tilde h_t(x^{(k)}_q)$ and $\tilde h_t(y^{(k)}_q)$.  That is,
\[
h_{t'}(z)=\left\{\begin{array}{ll}
\tilde h_t(y^{(k)}_q), & \mbox{if } h_t(z)=\tilde h_t(x^{(k)}_q),\\
\tilde h_t(x^{(k)}_q), & \mbox{if } h_t(z)=\tilde h_t(y^{(k)}_q),\\
h_t(z), & \mbox{otherwise}.\end{array}\right.
\]
Using that $h_t=\tilde h_t\circ\s_t$, 
we can rewrite this as
$h_{t'}(z)=\tilde h_t(\tau^{(k)}_q(\s_t(z))),$
as required.
\end{proof}

\section{Large cycles}
\label{macro-sect}

In order to use
Lemma~\ref{twist-lem} to analyze the
cycle structure of $\pi$,
we will first need results on the cycle structure
of $\tilde\varphi\circ \s_t$, where $\s_t$
is given by the interchange process and
$\tilde\varphi$ is a non-random permutation.

\subsection{Random and non-random transpositions}

The following result will be obtained using small
modifications of Lemmas~2.1--2.3 of~\cite{schramm}.
(A similar result can be obtained by a small
modification of Theorem~1 of~\cite{berestycki}.)
Here $\s_t$ denotes a sample of the interchange process
($\theta=1$ in~\eqref{PP-def})
on $1,\dotsc,n$, viewed at time $t$, 
and $\tilde\varphi$
is a deterministic permuation of $1,\dotsc,n$.
In this subsection we write $\PP$ for $\PP_1$.

\begin{proposition}\label{intch-prop}
Let $\la>1$ and $t=\la/n$.  For each 
$\eps>0$ there is $\d>0$ and $n_0(\la,\eps,\d)$ such that for 
$n\geq n_0$ we have
\[\PP(|\cC_1(\tilde\varphi\circ\s_t)|\geq \d n)\geq 1-\eps.\]
\end{proposition}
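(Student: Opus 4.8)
The statement asks to prove that for the interchange process σ_t at time t = λ/n with λ > 1, composing with a *deterministic* permutation φ̃ still produces a giant cycle.

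**Key insight needed:**

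The obstacle is that φ̃ is arbitrary/adversarial. Schramm's result handles φ̃ = id (pure interchange process gives giant cycles). The challenge: why doesn't some malicious φ̃ destroy all the large cycles that the interchange process creates?

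**My intuition about the proof strategy:**

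The crucial observation is that composing with a fixed permutation can only *help* or at worst not dramatically hurt. Specifically:
- The interchange process σ_t explores/connects vertices into components
- Think of this via the random graph: transpositions create edges, and the "connectivity structure" of σ_t is governed by an Erdős–Rényi-type random graph G(n, λ/n)
- When λ > 1, this random graph has a giant component of size Θ(n)

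Now, here's the key point about φ̃∘σ_t: even though φ̃ rearranges things, the *cycle structure* of φ̃∘σ_t relates to how σ_t's transpositions interact with φ̃'s structure.

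Let me think more carefully...

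**The real approach (mirroring Schramm's Lemmas 2.1–2.3):**

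Schramm's method uses a clever coupling/exploration argument. The idea:
1. Start building σ_t by adding transpositions one at a time
2. Track the component structure as an evolving random graph
3. Use the giant component of the underlying Erdős–Rényi graph G(n, λ/n) — when λ > 1 there's a giant component

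**Why φ̃ doesn't ruin things:**

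Here's my key realization: Consider the functional graph perspective. The cycles of φ̃∘σ_t can be analyzed by noting that if we have enough "random" transpositions (the σ_t part), the deterministic φ̃ gets "randomized."

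More precisely — suppose σ_t connects a large set of vertices into few components via its transpositions. The transpositions in σ_t act on labels. Even with a deterministic φ̃ prepended, the product φ̃∘σ_t will have large cycles as long as the random part σ_t "mixes" enough.

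**Sketching Schramm's argument adapted:**

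I would:
1. **Set up the exploration:** Reveal the transpositions of σ_t sequentially. View this as building up a random structure on n vertices.

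2. **Use the giant component:** The support graph of σ_t (vertices touched by at least one transposition, edges = transpositions) is essentially G(n, Poisson(λ)) edge structure. For λ > 1, a positive fraction of vertices lie in a giant connected component C of size ≥ αn.

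3. **Key lemma (the heart, = Schramm's trick):** Within a connected set of vertices that are linked by transpositions, composing with *any* permutation φ̃ still yields a cycle containing a positive fraction of those vertices. The mechanism: a path of transpositions σ through a connected component, composed with φ̃, creates a long cycle because following the orbit x → φ̃σ(x) → ... traverses the connected structure.

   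The formal argument: Schramm shows that conditioned on the *cycle lengths* being determined, the distribution of how cycles split/merge under additional random transpositions is controlled. A single extra random transposition either splits one cycle into two or merges two into one, each with computable probability. Adding the deterministic φ̃ is absorbed as an initial condition that doesn't affect the *asymptotic* giant-cycle formation.

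4. **Martingale/concentration:** Track the size of the largest cycle as transpositions are added, using that the process of cycle sizes behaves like a coagulation-fragmentation dynamics. The giant cycle emerges by a differential-equation or supermartingale argument showing largest cycle ≥ δn with high probability.

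**Where the main obstacle lies:**

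The **main difficulty** is Step 3 — showing that the *deterministic* φ̃ truly doesn't prevent giant cycle formation. In Schramm's original setting, the identity permutation means cycles literally correspond to connected components of the random graph. With φ̃ present, a connected component need NOT be a single cycle of φ̃∘σ_t.

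The resolution (and the modification of Schramm's lemmas): One argues that the random transpositions in σ_t, acting on top of φ̃, perform enough independent "random swaps" that the orbit structure of φ̃∘σ_t inherits a giant cycle from the connectivity. Essentially:
- The transpositions are uniformly random pairs
- Conditioned on which pairs are used (the random graph), the *times/order* and the specific mixing still randomize the permutation enough
- One uses a second-moment or direct coupling argument: fix the giant component C of the transposition graph; within C, the permutation φ̃∘σ_t restricted to C is "sufficiently random" that its largest cycle is Θ(|C|) = Θ(n) with high probability.

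**My honest assessment:**

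I'd expect the proof to go: establish the Erdős–Rényi giant component for λ > 1, then show (this is the real work, via Schramm's coupling) that within the giant component the composition with deterministic φ̃ still produces a macroscopic cycle — likely by analyzing how adding one random transposition merges the cycles containing its two endpoints, and showing this merging dynamic builds a giant cycle regardless of starting configuration φ̃.

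**The hard part:** Proving that the *adversarial* φ̃ cannot fragment the giant structure — this requires the randomness of σ_t's transpositions to dominate, which is precisely what Schramm's splitting/merging probability estimates (Lemmas 2.1–2.3) provide, suitably generalized to arbitrary initial permutation.
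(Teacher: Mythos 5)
Your proposal correctly identifies the crux --- that an adversarial $\tilde\varphi$ breaks the identification between cycles of the permutation and connected components of the transposition graph, which is what Schramm's argument rests on --- but it stops at naming this difficulty and does not supply the idea that resolves it. The paper's resolution is concrete and short: first pass to the conjugate $\s_t\circ\tilde\varphi$, so that the process \emph{starts} at the permutation $\tilde\varphi$ rather than ending with it; then build an auxiliary deterministic graph $\tilde G$ by writing each cycle $(x_1,\dotsc,x_m)$ of $\tilde\varphi$ as the product $(x_1,x_2)(x_2,x_3)\dotsb(x_{m-1},x_m)$ and taking the pairs $\{x_i,x_{i+1}\}$ as edges; finally superimpose the random transposition graph $G_t$ on $\tilde G$ to obtain $\tilde G_t$. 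By construction the components of $\tilde G_0$ are \emph{exactly} the cycles of $\tilde\varphi$, every cycle of $\s_t\circ\tilde\varphi$ is contained in a component of $\tilde G_t$, and --- this is the point --- the accounting in Schramm's Lemma~2.2 (every vertex lying in a large component but a small cycle is chargeable to a transposition that split a cycle, with at most two cycles charged to each transposition) goes through verbatim because there is no discrepancy between components and cycles at time zero. After that, the Erd\H os--R\'enyi giant component of $G_{t_0}\subseteq\tilde G_{t_0}$ and Schramm's Lemma~2.3 finish the argument exactly as in the case $\tilde\varphi=\id$.

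Without this construction your step~3 does not close. The claim that within a set of vertices connected by transpositions, composition with an arbitrary permutation still yields a cycle containing a positive fraction of those vertices is false as a deterministic statement, and you acknowledge as much when you note that a connected component need not be a single cycle; the subsequent appeal to the restricted permutation being ``sufficiently random'' that its largest cycle is $\Theta(|C|)$ is precisely the assertion to be proved, not an argument for it. Likewise the proposed martingale or differential-equation tracking of the largest cycle is neither carried out nor what is needed: the quantitative inputs are the splitting bound $2k/(n-1)$ per transposition (Schramm's Lemma~2.1, which is insensitive to $\tilde\varphi$), the time-zero coincidence of components and cycles, and Lemma~2.3's absorption of medium cycles into macroscopic ones. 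So your proposal is a reasonable plan of attack that locates the obstacle correctly, but the one genuinely new ingredient of the proof --- encoding $\tilde\varphi$ as edges of the graph so that the initial condition reads ``components equal cycles'' --- is missing, and no workable substitute for it is provided.
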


\begin{proof}
First note that, since $\tilde\varphi\circ\s_t$ and
$\s_t\circ\tilde\varphi$ are conjugate, it is equivalent
to consider the largest cycle in the process
$(\s_t\circ\tilde\varphi)_{t\geq0}$ which starts with the permutation
$\tilde\varphi$ at time $t=0$.  We associate with $\tilde\varphi$ a
graph $\tilde G$
whose connected components coincide (as sets) with the
cycles of $\tilde\varphi$.
One way to do this is to
decompose each cycle of $\tilde\varphi$ as:
\[
(x_1,x_2,\dotsc,x_m)=(x_1,x_2)(x_2,x_3)\dotsb
(x_{m-1},x_m)
\]
and let the edges of $\tilde G$ be
the pairs $\{x_i,x_{i+1}\}$ obtained in this way.
For  $t\geq 0$ we let $G_t$ be the (multi-)graph 
obtained by representing each new transposition that
appears in the process $(\s_t)_{t\geq0}$ by an edge,
and we let $\tilde G_t$ be the (multi-)graph obtained
by superimposing $G_t$ on $\tilde G$.
Note that $G_t$ has the distribution of an Erd\H os--R\'enyi
graph $\cG(n,p)$ with $p=1-e^{-t}$.
Also note that each cycle of $\s_t\circ\tilde\varphi$
is contained in some connected component of $\tilde G_t$.

We have that Lemmas~2.1--2.3 of~\cite{schramm} hold in
this situation, with the graph $G_t$ replaced by $\tilde G_t$.
Indeed, one need only check the proof of Lemma~2.2, 
since Lemmas~2.1 and~2.3 do
not refer to the graph.  We provide an outline
of the arguments.

As in Lemma~2.1 of~\cite{schramm}, each time we apply a 
new transposition in $\s_t$, the probability that it splits
an existing cycle so that at least one of the resulting
 cycles has size $\leq k$ is at most $2k/(n-1)$.
Let $V^t_{\tilde G}(k)\se V$ be the union of all
components of $\tilde G_t$ of size at least $k$, 
and let $V^t_{X}(k)\se V$ be the union of all the cycles of 
$\s_t\circ\tilde\varphi$ of size at least $k$.
As in Lemma~2.2 of~\cite{schramm} we have that
\be\label{sl22}
\EE|V^t_{\tilde G}(k)\sm V^t_{X}(k)|\leq 
t\binom{n}{2}\frac{4 k^2}{n-1}.
\ee
(Recall that our process is sped up by a factor
$\binom{n}{2}$ compared to~\cite{schramm}.)
This is because each cycle of size $<k$ which
lies in a component of size $\geq k$ can be associated with
a transposition which split an existing cycle
so that at least one resulting cycle had size 
$\leq k$ (and this can be done so that at most 
two cycles get mapped to the same transposition).
Here we use the fact that at time $t=0$ the components
are equal to the cycles.

Set $t_1=\la/n$ and let $t_0\leq t_1$
with $t_1-t_0=o(n^{-1})$.  
From~\eqref{sl22} we see that, at time $t_0$,
only a tiny fraction of vertices in components
$\geq 2n^{1/4}$ lie in cycles $<2n^{1/4}$:
\[
\EE|V^{t_0}_{\tilde G}(2n^{1/4})\sm 
    V^{t_0}_{X}(2n^{1/4})|\leq  32 \la n^{1/2}.
\]
Let $z(\la)>0$ be the survival probability of a
Galton--Watson process with Poisson($\la$)
offspring distribution, and set $\d'=z(\la)/4$.
Applying the Erd\H os--R\'enyi theorem
to $G_{t_0}\se \tilde G_{t_0}$  we find that
\be\label{sl222}
\begin{split}
\PP(|V^{t_0}_{X}(2n^{1/4})|\leq \d' n)&\leq 
\PP(|V^{t_0}_{\tilde G}(2n^{1/4})\sm V^{t_0}_{X}(n^{1/4})|>\d' n)+o(1)\\
&\leq \frac{32\la n^{1/2}}{\d' n}+o(1).
\end{split}\ee
Here the $o(1)$ is uniform in $\tilde\varphi$.

Lemma~2.3 of~\cite{schramm} applies in our setting,
with adjusted time-scaling.  
Let $j$ be such that $n^{1/4}\leq 2^j\leq 2n^{1/4}$,
write $\rho=2^j/n$, set
\be
\D=\binom{n}{2}^{-1}
\lceil 2^6(\d')^{-1}\rho^{-1}\log_2 (\rho^{-1})\rceil
=O(n^{-5/4}\log n),
\ee
and let $t_0=t_1-\D$.   The Lemma tells us that a large 
fraction of the vertices that lie in cycles
of size $\geq 2n^{1/4}$ at time $t_0$ will lie in cycles of
size of the order $n$ at time $t_1$.  More precisely,
there is a constant $C>0$ such that for all $\eps'\in(0,1/8)$
we have 
\be\label{sl23}\begin{split}
&\PP\left(|V_X^{t_0}(2n^{1/4})\sm V_X^{t_1}(\eps' \d' n)|>\d' n
    \left|\; |V^{t_0}_{X}(2n^{1/4})|> \d' n\right.\right)\\
&\qquad\leq C \cdot (\d')^{-2}\eps' |\log(\eps'\d')|.
\end{split}\ee
Letting $\d=\eps'\d'$ for $\eps'>0$  small enough we find 
using~\eqref{sl222} and~\eqref{sl23} that, for large enough 
$n$,
\[\begin{split}
\PP(V_X^{t_1}(\d n)=\es)&\leq 
\PP(|V_X^{t_0}(2n^{1/4})\sm V_X^{t_1}(\eps' \d' n)|>\d' n)\\
 &\qquad+\PP(|V^{t_0}_{X}(2n^{1/4})|\leq \d' n)\\
&\leq \eps,
\end{split}\]
as claimed.
\end{proof}

\subsection{Proof of Theorem~\ref{cycles-thm}}

Fix $\la>\theta$ and let $\b=\la/n$.
It suffices to prove that, for any sequence
$\d=\d_n\to0$, we have that 
$\PP(|\cC_1(\pi)|<\d n)\to0$,
where $\PP=\PP_\theta$.

Let $N=|R_0|$ denote the number of red vertices, and
let $\La=\la N/n$ so that the auxiliary 
process $\s_t$
of Lemma~\ref{twist-lem} is an interchange process  on $N$
points viewed at time $\La/N$.  
Let $\cC_1^\r=\cC_1(\tilde\varphi\circ\s_{\La/N})$ 
be the largest red cycle.
Pick $\a$ such that $\tfrac1\la<\a<r$.
Since $\cC_1$ is red with probability $r$, 
\be\label{pf-eq}
\begin{split}
r\PP(|\cC_1|<\d n)&\leq 
  \PP(N\geq \a n, |\cC_1^\r|<\d n)+
  \PP(N<\a n, |\cC_1|<\d n)\\
& \leq \EE[\one\{\La\geq\a\la\}\PP(|\cC_1^\r|<(\la\d) n\mid R)]\\
&\qquad   
+ \PP(N<\a n, |\cC_1|<\d n).
\end{split}\ee
When $\La\geq\a\la>1$, Prop.~\ref{intch-prop}
implies that $\PP(|\cC_1^\r|<(\la\d) n\mid R)\to 0$.
The dominated-convergence-theorem therefore implies that 
the first term on the right-hand-side 
of~\eqref{pf-eq} converges to 0.

To bound the second term, let $\PP'$, $\EE'$ and $\VV'$
denote the conditional probability,
expectation and variance, respectively, given the cycle sizes 
$|\cC_1|,|\cC_2|,\dotsc$  Then
$\EE'(N)=\sum_{i\geq1} r |\cC_i|=rn,$
and
\[
\VV'(N)=
\sum_{i\geq1} r(1-r)|\cC_i|^2
\leq |\cC_1|\sum_{i\geq1} |\cC_i|=|\cC_1|n.
\]
Using Chebyshev's inequality: 
\[\begin{split}
\PP(N<\a n,|\cC_1|<\d n)&=
\EE[\one\{|\cC_1|<\d n\}\PP'(N<\a n)]\\
&\leq \EE\Big[\one\{|\cC_1|<\d n\}\frac{|\cC_1|n}{n^2(r-\a)^2}\Big]\\
&\leq \frac{\d}{(r-\a)^2}\to0.
\end{split}
\]
This proves the result.\qed

\begin{remark}
Our proof can straightforwardly
be extended to cases where the cycles $\cC$ 
(or loops $\g$) receive potentially different 
weights.  Indeed,
consider the probability measure $\PP(\cdot)$
given by
\[
\frac{d\PP}{d\PP_1}(\om)=
\frac{1}{Z}\prod_{\g\in\cL(\om)}\theta(\g).
\]
The conclusion of Theorem~\ref{cycles-thm}
holds under the assumption that there 
is a constant $\theta<\la$ such that 
 $1\leq \theta(\cdot)\leq\theta$ uniformly.

Such weights occur for example in the Heisenberg model
in the presence of an external field $h>0$, where
$\theta(\g)=2\cosh(h|\g|)\leq 2\cosh(h\la)$.
See e.g.~\cite{bjo-uel,guw}.
\end{remark}

\subsection*{Acknowledgement}

The author wishes to thank Svante Janson for
helpful discussions about Theorem~\ref{cycles-thm},
Balint T\'oth for pointing out the reference~\cite{toth-bec},
and the anonymous referee for helpful comments.


\begin{thebibliography}{99}



\bibitem{alon-kozma}
G.~Alon and G.~Kozma. 
{\em The probability of long cycles in interchange processes.} 
Duke Mathematical Journal 162.9 (2013): 1567--1585.

\bibitem{angel}
O.~Angel.
{\em Random Infinite Permutations and the Cyclic
Time Random Walk.}
Discrete Mathematics and Theoretical Computer Science
AC (2003): 9--16.


\bibitem{berestycki}
N.~Berestycki, 
{\em Emergence of giant cycles and slowdown 
transition in random transpositions and k-cycles.} 
Electr. J. Probab 16.5 (2011): 152-173.

\bibitem{bd}
N.~Berestycki and R.~Durrett. 
{\em A phase transition in the random 
transposition random walk.} 
Probability theory and related fields 136.2 (2006): 203-233.


\bibitem{bjo-uel}
J.~E.~Bj\"ornberg and D.~Ueltschi.
{\em Decay of transverse correlations 
     in quantum Heisenberg models.}
Journal of Mathematical Physics
56.4 (2015)

\bibitem{bgj}
B.~Bollob\'as, G.~Grimmett and S.~Janson. 
{\em The random-cluster model on the complete graph.} 
Probability Theory and Related Fields 104.3 (1996): 283-317.


\bibitem{guw}
C.~Goldschmidt, D.~Ueltschi, P.~Windridge,
{\em Quantum Heisenberg models and their probabilistic representations},
in Entropy and the Quantum II, Contemp. Math. 552, 177--224 (2011)


\bibitem{hammond-infinite}
A.~Hammond.
{\em Infinite cycles in the random stirring process on trees.}
Bull. Inst. Math. Acad. Sin (New Series) 8.1 (2013): 85--104.

\bibitem{hammond-sharp}
A.~Hammond.
{\em Sharp phase transition in the random stirring model on trees.}
Probability Theory and Related Fields
161.3--4 (2015):  429--448

\bibitem{penrose}
O.~Penrose. 
{\em Bose-Einstein condensation in an exactly soluble system of 
interacting particles.} 
J. Stat. Phys. 63.3--4 (1991):  761--781.


\bibitem{schramm}
O.~Schramm, 
{\em Compositions of random transpositions.} 
Selected Works of Oded Schramm. 
Springer New York (2011): 571--593.

\bibitem{toth-bec}
B.~T\'oth,
{\em Phase transition in an interacting {B}ose system.
An application of the theory of {V}entsel' and {F}riedlin},
J. Stat. Phys 61.2--4 (1990): 749--764.

\bibitem{toth}
B.~T\'oth,
{\em Improved lower bound on the thermodynamic 
pressure of the spin $1/2$ {H}eisenberg ferromagnet},
Lett. Math. Phys. 28 (1993): 75--84.

\end{thebibliography}
\end{document}